\numberwithin{equation}{section}
\newtheorem{theorem}{Theorem}[section]
\newtheorem{lemma}[theorem]{Lemma}
\newtheorem{corollary}[theorem]{Corollary}
\title{Derivations, local and 2-local derivations on some algebras of operators on Hilbert C*-modules}
\author{\begin{tabular}{c} Jun He, Jiankui Li\footnote{Corresponding author.
E-mail address: jiankuili@yahoo.com},  and Danjun Zhao
\\{\small\it Department of Mathematics, East China University of
Science and Technology}\\
{\small\it Shanghai 200237, China}
\end{tabular}}
\date{}
\begin{document}
\maketitle \abstract
For a commutative C*-algebra $\mathcal A$ with unit $e$ and a Hilbert~$\mathcal A$-module $\mathcal M$,
denote by End$_{\mathcal A}(\mathcal M)$ the algebra of all bounded $\mathcal A$-linear mappings on $\mathcal M$,
and by End$^*_{\mathcal A}(\mathcal M)$ the algebra of all adjointable mappings on $\mathcal M$.
We prove that if $\mathcal M$ is full, then each derivation on End$_{\mathcal A}(\mathcal M)$ is $\mathcal A$-linear,
continuous, and inner, and each 2-local derivation on End$_{\mathcal A}(\mathcal M)$
or End$^{*}_{\mathcal A}(\mathcal M)$ is a derivation. If there exist $x_0$ in $\mathcal M$
and $f_0$ in $\mathcal M^{'}$, such that $f_0(x_0)=e$, where $\mathcal M^{'}$ denotes the set of all bounded
$\mathcal A$-linear mappings from $\mathcal M$ to $\mathcal A$, then each $\mathcal A$-linear local derivation on
End$_{\mathcal A}(\mathcal M)$ is a derivation.

\
{\textbf{Keywords:}}  ~~Derivations, Hilbert C*-modules, inner derivations, local derivations, 2-local derivations

\
{\textbf{Mathematics Subject Classification(2010):}} 47B47; 47L10

\
\section{Introduction and preliminaries}\

The structure of derivations on operator algebras is an important part of the theory of operator algebras.

Let $\mathcal{A}$ be an algebra and $\mathcal{M}$ be an $\mathcal{A}$-bimodule.
Recall that a \emph{derivation} is a linear mapping $d$ from $\mathcal A$ into $\mathcal{M}$
such that $d(xy) = d(x)y + xd(y)$, for all $x, y$ in $\mathcal A$. For each $m$ in $\mathcal M$,
one can define a derivation $D_m$ by $D_m(x) =mx - xm$, for all $x$ in $\mathcal A$.
Such derivations are called \emph{inner derivations}.

It is a classical problem to identify those algebras on which all derivations are inner derivations. Several authors
investigate this topic. The following two results are classical. S. Sakai \cite{sakai} proves that all derivations from a W*-algebra into itself are inner derivations.
E. Christensen \cite{Christensen} proves that all derivations from a nest algebra into itself are inner derivations.

In 1990, R. Kadison \cite{Kadison} and D. Larson, A. Sourour \cite{Larson} independently introduce
the concept of local derivation in the following sense:
a linear mapping $\delta$ from $\mathcal{A}$ into $\mathcal{M}$ such that
for every $a\in$ $\mathcal{A}$, there exists a derivation $d_a :$
$\mathcal{A}$ $\rightarrow$ $\mathcal{M}$, depending on $a$,
satisfying $\delta(a)=d_a(a)$. In \cite{Kadison}, R. Kadison proves that each continuous local
derivation from a von Neumann algebra into its dual Banach
module is a derivation. In \cite{Larson}, D. Larson and A. Sourour prove that each local derivation
from $B(\mathcal X)$ into itself is a derivation,
where $\mathcal X$ is a Banach space.
B. Jonson \cite{Johnson} proves that each local derivation from a C*-algebra
into its Banach bimodule is a derivation.
Z. Pan and the second author of this paper \cite{Li} prove that each local derivation from the algebra $\mathcal M \bigcap alg\mathcal L$
into $B(\mathcal H)$ is a derivation, where $\mathcal H$ is a Hilbert space, $\mathcal M$ is a von Neumann algebra acting on $\mathcal H$, 
and $\mathcal L$ is a commutative subspace lattice in $\mathcal M$.
For more information about this topic, we refer to \cite{Ayupov2, Crist, Hadwin}.

In 1997, P. $\check{S}$emrl \cite{Semrl} introduces the concept of 2-local derivations. Recall
that a mapping $\delta$ : $\mathcal{A}$ $\rightarrow$ $\mathcal{M}$ (not necessarily linear)
is called a \emph{2-local derivation} if for each $a,b\in$ $\mathcal{A}$, there exists a derivation
$d_{a,b}$ : $\mathcal{A}$ $\rightarrow$ $\mathcal{M}$ such that  $\delta(a)=d_{a,b}(a)$ and
$\delta(b)=d_{a,b}(b)$. Moreover, the author proves that every 2-local derivation
on $B(\mathcal H)$ is a derivation for a separable Hilbert space $\mathcal H$. J. Zhang and H. Li \cite{Zhang} extend the above
result for arbitrary symmetric digraph matrix algebras and construct an example of 2-local derivation
which is not a derivation on the algebra of all upper triangular complex $2\times2$ matrices.
S. Ayupov and K. Kudaybergenov \cite{Ayupov3} prove that each 2-local derivation on a von Neumann algebra is a derivation.
For more information about this topic, we refer to \cite{Ayupov2, He, Kim}.

In this paper, we study derivations, local derivations and 2-local derivations on some algebras of operators on Hilbert C*-modules.
There are few results in this topic.
P. Li, D. Han and W. Tang \cite{Lipengtong} prove that each derivation on End$^{*}_{\mathcal A}(\mathcal M)$ is inner,
where $\mathcal M$ is a full Hilbert C*-module over a commutative unital C*-algebra $\mathcal A$.
M. Moghadam, M. Miri and A. Janfada \cite{Mog} prove that each $\mathcal A$-linear derivation on End$_{\mathcal A}(\mathcal M)$ is inner,
where $\mathcal M$ is a full Hilbert C*-module over a commutative unital C*-algebra $\mathcal A$ with the property that there exist $x_0$ in $\mathcal M$
and $f_0$ in $\mathcal M^{'}$ such that $f_0(x_0)=e$.

Hilbert C*-modules provide a natural generalization of Hilbert spaces by replacing the complex field $\mathbb{C}$ with an arbitrary C*-algebra.
The theory of Hilbert C*-modules plays an important role in the theory of operator algebras, as it can be applied in many fields, such as index
theory of elliptic operators, K- and K K-theory, noncommutative geometry, and so on.

In the following, we would firstly review some properties of Hilbert C*-modules \cite{Lance}.

Let $\mathcal A$ be a C*-algebra and $\mathcal M$ be a left $\mathcal A$-module.

$\mathcal M$ is called a \emph{Pre-Hilbert~$\mathcal A$-module} if there exists a mapping
$\langle\cdot,\cdot\rangle:\mathcal M\times\mathcal M\longrightarrow\mathcal A$ with the following properties:
for each $\lambda\in\mathbb{C},a\in\mathcal A,x,y,z\in\mathcal M$,\\
(1) $\langle x,x\rangle\geq0$, and $\langle x,x\rangle=0$ implies that $x=0$,\\
(2) $\langle \lambda x+y,z\rangle=\lambda\langle x,z\rangle+\langle y,z\rangle$,\\
(3) $\langle ax,y\rangle=a\langle x,y\rangle$,\\
(4) $\langle x,y\rangle=\langle y,x\rangle^*$.

The mapping $\langle\cdot,\cdot\rangle$ is called an $\mathcal A$-valued inner product.
The inner product induces a norm on $\mathcal M$:~$\|x\|=\|\langle x,x\rangle\|^{1/2}$.
$\mathcal M$ is called a \emph{Hilbert~$\mathcal A$-module}(or more exactly, a Hilbert C*-module over $\mathcal A$),
 if it is complete with respect to this norm.

We denote by $\langle\mathcal M,\mathcal M\rangle$ the closure of the linear span of all the elements of the form $\langle x,y\rangle,x,y\in\mathcal M$.
$\mathcal M$ is called a \emph{full} Hilbert~$\mathcal A$-module
if $\langle\mathcal M,\mathcal M\rangle=\mathcal A$.

For a full Hilbert~$\mathcal A$-module $\mathcal M$,
we have the following lemma.
\begin{lemma}\label{l1}
Let $\mathcal A$ be a C*-algebra with unit $e$ and $\mathcal M$ be a full Hilbert~$\mathcal A$-module.
There exists a sequence $\{x_i\}_{i=1}^n\subseteq\mathcal M$,
such that $\sum_{i=1}^n\langle x_i,x_i\rangle=e$.
\end{lemma}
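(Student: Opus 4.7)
The plan is to show first that the identity $e$ already lies in the algebraic linear span $J$ of $\{\langle x,y\rangle : x,y\in\mathcal M\}$, and then to convert such a representation into a diagonal sum by polarization and a positive-invertible rescaling trick. One checks directly from the module axioms that $a\langle x,y\rangle=\langle ax,y\rangle$ and $\langle x,y\rangle a=\langle x,a^{*}y\rangle$, so $J$ is a two-sided ideal of $\mathcal A$, while the fullness hypothesis says $J$ is norm-dense in $\mathcal A$. I would then pick $a\in J$ with $\|e-a\|<1$; such an $a$ is invertible in $\mathcal A$ by the Neumann series, and because $J$ is a left ideal, the element $e=a^{-1}a$ lies in $J$. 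So $e$ admits a \emph{finite} representation $e=\sum_{k=1}^{m}\lambda_{k}\langle u_{k},v_{k}\rangle$ with $\lambda_{k}\in\mathbb C$ and $u_{k},v_{k}\in\mathcal M$.

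Next, I would apply the polarization identity
\[
\langle u,v\rangle \;=\; \tfrac{1}{4}\sum_{j=0}^{3} i^{j}\bigl\langle u+i^{j}v,\,u+i^{j}v\bigr\rangle
\]
to each summand, obtaining $e=\sum_{\ell}c_{\ell}\langle w_{\ell},w_{\ell}\rangle$ for finitely many $c_{\ell}\in\mathbb C$ and $w_{\ell}\in\mathcal M$. Since $e$ is self-adjoint and each $\langle w_{\ell},w_{\ell}\rangle$ is self-adjoint, taking the self-adjoint part allows me to replace $c_{\ell}$ by $\operatorname{Re} c_{\ell}\in\mathbb R$. Separating positive from negative coefficients and absorbing $\sqrt{|c_{\ell}|}$ into $w_{\ell}$ produces
\[
e \;=\; p-q, \qquad p=\sum_{i=1}^{n}\langle x_{i},x_{i}\rangle,\quad q=\sum_{j=1}^{r}\langle y_{j},y_{j}\rangle,
\]
with $p,q\in\mathcal A_{+}$. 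Because $q\ge 0$, we have $p=e+q\ge e$, so $p$ is positive and invertible in the unital C*-algebra $\mathcal A$, and $p^{-1/2}\in\mathcal A$ exists. Using the identity $\langle bx,bx\rangle=b\langle x,x\rangle b^{*}$ with the self-adjoint element $b=p^{-1/2}$, I conclude
\[
e \;=\; p^{-1/2}\,p\,p^{-1/2} \;=\; \sum_{i=1}^{n}\bigl\langle p^{-1/2}x_{i},\,p^{-1/2}x_{i}\bigr\rangle,
\]
which is the required representation, with the new vectors $p^{-1/2}x_{i}\in\mathcal M$.

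The only step that is not pure bookkeeping is the first one: one must recognize that a dense two-sided ideal of a unital C*-algebra necessarily contains the identity (via the Neumann-series argument). Once $e\in J$ is established, polarization turns any complex expansion into a diagonal one, and the rescaling by $p^{-1/2}$ upgrades the difference $p-q$ into a single diagonal sum equal to $e$.
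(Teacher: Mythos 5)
Your proof is correct. The paper states Lemma \ref{l1} without proof (it is a standard fact about full Hilbert C*-modules over unital C*-algebras, implicit in the cited literature), and your argument --- density of the ideal $J$ forces $e\in J$ via the Neumann series, polarization reduces to diagonal inner products, taking self-adjoint parts gives $e=p-q$ with $p,q\geq 0$, and the rescaling $x_i\mapsto p^{-1/2}x_i$ using $\langle bx,bx\rangle=b\langle x,x\rangle b^{*}$ with $p\geq e$ invertible --- is a complete and standard derivation, so there is nothing to fault.
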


A linear mapping $T$ from $\mathcal M$ into itself is said to be \emph{$\mathcal A$-linear}
if $T(ax)=aT(x)$ for each $a\in\mathcal A$ and $x\in\mathcal M$.
A bounded $\mathcal A$-linear mapping from $\mathcal M$ into itself is called an \emph{operator} on $\mathcal M$.
Denote by End$_{\mathcal A}(\mathcal M)$ all operators on $\mathcal M$.
End$_{\mathcal A}(\mathcal M)$ is a Banach algebra.

A mapping $T$ from $\mathcal M$ into itself is said to be \emph{adjointable} if there exists a mapping $T^*$ such that
$\langle Tx,y\rangle=\langle x,T^*y\rangle$, for all $x,y\in\mathcal M$.
Notice that each adjointable mapping must be an operator. 
Denote by End$^{*}_{\mathcal A}(\mathcal M)$ all adjointable operators on $\mathcal M$.
End$^{*}_{\mathcal A}(\mathcal M)$ is a C*-algebra.

Similarly, a linear mapping $f$ from $\mathcal M$ into $\mathcal A$ is said to be $\mathcal A$-linear
if $f(ax)=af(x)$ for each $a\in\mathcal A$ and $x\in\mathcal M$.
The set of all bounded $\mathcal A$-linear mappings from $\mathcal M$ to $\mathcal A$ is denoted by $\mathcal M^{'}$.

For each $x$ in $\mathcal M$,
one can define a mapping $\hat{x}$ from $\mathcal M$ to $\mathcal A$
by follows: $\hat{x}(y)=\langle y,x\rangle$, for all $y\in\mathcal M$.
Obviously, $\hat{x}\in\mathcal M^{'}$.

For each $x$ in $\mathcal M$ and $f$ in $\mathcal M^{'}$,
one can define a mapping $\theta_{x,f}$ from $\mathcal M$ into itself by follows:
$\theta_{x,f}y=f(y)x$, for all $y\in\mathcal M$.
Obviously, $\theta_{x,f}\in$ End$_{\mathcal A}(\mathcal M)$.

In particular,
for each $x,y$ in $\mathcal M$,
we have $\theta_{x,\hat{y}}z=\hat{y}(z)x=\langle z,y\rangle x$, for all $z\in\mathcal M$.

For the operators of the above forms, we have the following lemmas.

\begin{lemma}\label{l2}
Let $\mathcal M$ be a Hilbert C*-module over a C*-algebra $\mathcal A$.\\
For all $a\in\mathcal A,x,y\in\mathcal M,f,g\in\mathcal M^{'},A\in$ End$_{\mathcal A}(\mathcal M)$, we have\\
(1) $\theta_{x,f}A=\theta_{x,f\circ A}$,\\
(2) $A\theta_{x,f}=\theta_{Ax,f}$,\\
(3) if in addition, $\mathcal A$ is commutative, then $\theta_{x,f}\theta_{y,g}=f(y)\theta_{x,g}$,~$\theta_{ax,f}=a\theta_{x,f}$.
\end{lemma}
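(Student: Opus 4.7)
All four identities are equalities of mappings $\mathcal M \to \mathcal M$, so the plan is to evaluate both sides on an arbitrary $z \in \mathcal M$ and reduce each to a short manipulation using only the defining formula $\theta_{x,f}(z) = f(z)x$.

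For (1), composing with $A$ on the right gives $(\theta_{x,f}A)(z) = f(Az)x$, and since $f \circ A$ is a bounded $\mathcal A$-linear functional (hence an element of $\mathcal M'$), this expression is by definition $\theta_{x,f\circ A}(z)$. For (2), composing with $A$ on the left gives $(A\theta_{x,f})(z) = A(f(z)x)$, and the $\mathcal A$-linearity of $A$ lets me move the coefficient $f(z) \in \mathcal A$ out past $A$, producing $f(z)Ax = \theta_{Ax,f}(z)$.

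Part (3) is where the commutativity hypothesis enters. For the first identity, I would expand $(\theta_{x,f}\theta_{y,g})(z) = f(g(z)y)x$; $\mathcal A$-linearity of $f$ pulls $g(z)$ out to give $g(z)f(y)x$, and commutativity of $\mathcal A$ then swaps $g(z)$ and $f(y)$ so that the right-hand side is recognised as $f(y)\theta_{x,g}(z)$. For the second, $\theta_{ax,f}(z) = f(z)(ax)$, and commutativity lets me rewrite this as $a\,f(z)\,x = a\,\theta_{x,f}(z)$.

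There is essentially no obstacle here; each assertion collapses to a one-line calculation. The only points worth flagging explicitly in the write-up are exactly which structural hypothesis is invoked at each step: $\mathcal A$-linearity of $A$ in (2), $\mathcal A$-linearity of $f$ together with commutativity of $\mathcal A$ in the first identity of (3), and commutativity of $\mathcal A$ alone in the second identity of (3).
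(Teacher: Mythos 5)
Your computations are all correct: the paper states this lemma without proof, treating the identities as immediate consequences of the definition $\theta_{x,f}(z)=f(z)x$, and your pointwise verification is exactly the intended argument. You also correctly flag where each hypothesis enters ($\mathcal A$-linearity of $A$ in (2), $\mathcal A$-linearity of $f$ and commutativity of $\mathcal A$ in (3)), which is the only content the lemma really has.
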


\begin{lemma}\label{l3}
Let $\mathcal M$ be a Hilbert C*-module over a C*-algebra $\mathcal A$.\\
For all $a\in\mathcal A,x,y,z,w\in\mathcal M,A\in$ End*$_{\mathcal A}(\mathcal M)$, we have\\
(1) $\theta_{x,\hat{y}}\in$ End*$_{\mathcal A}(\mathcal M)$, and $\theta^*_{x,\hat{y}}=\theta_{y,\hat{x}}$,\\
(2) $\theta_{x,\hat{y}}A=\theta_{x,\hat{y}\circ A}=\theta_{x,\widehat{A^*y}}$,\\
(3) $A\theta_{x,\hat{y}}=\theta_{Ax,\hat{y}}$,\\
(4) if in addition, $\mathcal A$ is commutative, then $\theta_{x,\hat{y}}\theta_{z,\hat{w}}=\langle z,y\rangle\theta_{x,\hat{w}}$,~$\theta_{ax,\hat{y}}=a\theta_{x,\hat{y}}=\theta_{x,\widehat{a^*y}}$.
\end{lemma}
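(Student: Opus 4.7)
The proof is entirely a direct verification from the defining identities of the $\mathcal A$-valued inner product together with the formulas already established in Lemma~\ref{l2}, so my plan is to separate out the four parts and handle them one at a time, using Lemma~\ref{l2} as a black box whenever it applies.

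For part (1), the plan is to compute $\langle\theta_{x,\hat y}u,v\rangle$ and $\langle u,\theta_{y,\hat x}v\rangle$ for arbitrary $u,v\in\mathcal M$ and show they agree. Expanding using the definition of $\theta$ and $\hat{\cdot}$, the first equals $\langle\langle u,y\rangle x,v\rangle=\langle u,y\rangle\langle x,v\rangle$ by axiom~(3) of the inner product. For the second, I would first convert with axiom~(4) to $\langle\langle v,x\rangle y,u\rangle^{*}$, apply axiom~(3), then take the adjoint back, using axiom~(4) once more on each factor. The minor subtlety here is that I do \emph{not} want to silently commute elements of $\mathcal A$: the computation must be carried out so that the two sides match term-by-term without using commutativity, since part~(1) is stated for a general C*-algebra. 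This gives $\theta_{x,\hat y}^{*}=\theta_{y,\hat x}$.

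For parts (2) and (3), the strategy is simply to invoke Lemma~\ref{l2}(1) and (2) with $f=\hat y$. The only new ingredient is the identity $\hat y\circ A=\widehat{A^{*}y}$, which I obtain by evaluating on a test vector: $(\hat y\circ A)(u)=\langle Au,y\rangle=\langle u,A^{*}y\rangle=\widehat{A^{*}y}(u)$. Substituting this into Lemma~\ref{l2}(1) yields (2), while Lemma~\ref{l2}(2) gives (3) at once.

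For part (4), I now switch on commutativity of $\mathcal A$. For the first formula I would evaluate $\theta_{x,\hat y}\theta_{z,\hat w}u$ on an arbitrary $u\in\mathcal M$, getting $\langle u,w\rangle\langle z,y\rangle x$ after two applications of axioms~(3) and the definition of $\hat{\cdot}$; commutativity of $\mathcal A$ then permits interchanging $\langle u,w\rangle$ and $\langle z,y\rangle$ to recognize the result as $\langle z,y\rangle\theta_{x,\hat w}u$. For the second formula, $\theta_{ax,\hat y}=a\theta_{x,\hat y}$ follows from Lemma~\ref{l2}(3) (or again by direct evaluation), and the identity $a\theta_{x,\hat y}=\theta_{x,\widehat{a^{*}y}}$ is obtained by noting, via axioms~(3) and~(4), that $\langle u,a^{*}y\rangle=\langle u,y\rangle a$ and then using commutativity to move $a$ past $\langle u,y\rangle$.

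There is no real obstacle; the only thing to watch for is keeping track of which parts genuinely require $\mathcal A$ to be commutative and which do not, and being careful with the left/right placement of elements of $\mathcal A$ when invoking the two compatibility axioms $\langle ax,y\rangle=a\langle x,y\rangle$ and $\langle x,y\rangle=\langle y,x\rangle^{*}$.
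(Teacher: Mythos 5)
Your proposal is correct: the paper states Lemma~\ref{l3} without proof (it is a standard fact from the Hilbert C*-module toolkit, cf.\ Lance), and your direct verification is exactly the routine argument one would supply, with the computations for (1)--(4) all checking out. You are also right about the one point worth being careful on, namely that part (1) needs no commutativity --- $\langle u,\theta_{y,\hat x}v\rangle=\langle\langle v,x\rangle y,u\rangle^{*}=\langle y,u\rangle^{*}\langle v,x\rangle^{*}=\langle u,y\rangle\langle x,v\rangle=\langle\theta_{x,\hat y}u,v\rangle$ --- whereas both identities in part (4) genuinely do.
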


For a commutative C*-algebra $\mathcal A$,
for each $a$ in $\mathcal A$,
one can define a mapping $T_a$ from $\mathcal M$ into itself by follows:
$T_ax=ax$, for all $x\in\mathcal M$.
Obviously, $T_a\in$ End$_{\mathcal A}(\mathcal M)$.
It is worthwhile to notice that if $\mathcal A$ is not commutative,
then $T_a$ is not $\mathcal A$-linear.
In this case, $T_a$ is not in End$_{\mathcal A}(\mathcal M)$.

\begin{lemma}\label{l4}
Let $\mathcal A$ be a commutative C*-algebra with unit $e$ and $\mathcal M$ be a full Hilbert~$\mathcal A$-module.
Then $\mathcal Z($End$_{\mathcal A}(\mathcal M))=\{T_a:~a\in\mathcal A\}$.
\end{lemma}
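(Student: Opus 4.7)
The plan is to prove the two inclusions separately. For the inclusion $\{T_a : a \in \mathcal{A}\} \subseteq \mathcal{Z}(\text{End}_{\mathcal{A}}(\mathcal{M}))$, I would first verify that $T_a$ really is in $\text{End}_{\mathcal{A}}(\mathcal{M})$: commutativity of $\mathcal{A}$ gives $\mathcal{A}$-linearity since $T_a(bx)=abx=baT_a x=bT_a x$, and boundedness is immediate from $\|ax\|\leq\|a\|\|x\|$. Then for any $A\in\text{End}_{\mathcal{A}}(\mathcal{M})$ and $x\in\mathcal{M}$, $\mathcal{A}$-linearity of $A$ yields $T_a A x = a\,Ax = A(ax) = A T_a x$, so $T_a$ lies in the center.

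For the nontrivial inclusion, let $B\in\mathcal{Z}(\text{End}_{\mathcal{A}}(\mathcal{M}))$. The strategy is to extract an element $a\in\mathcal{A}$ by testing $B$ against the rank-one type operators $\theta_{x,f}$, which are known to lie in $\text{End}_{\mathcal{A}}(\mathcal{M})$. Since $B$ commutes with each $\theta_{x,f}$, parts (1) and (2) of Lemma~\ref{l2} give
\[
\theta_{Bx,f}=B\theta_{x,f}=\theta_{x,f}B=\theta_{x,f\circ B}
\]
for all $x\in\mathcal{M}$ and $f\in\mathcal{M}'$. Evaluating both sides on an arbitrary $z\in\mathcal{M}$ yields the key pointwise identity
\[
f(z)\,Bx=f(Bz)\,x.
\]

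To turn this into $Bx=ax$ I would specialize $f=\hat{x_i}$ and $z=x_i$, where $\{x_i\}_{i=1}^{n}$ is the sequence supplied by Lemma~\ref{l1} with $\sum_{i=1}^{n}\langle x_i,x_i\rangle=e$. This gives $\langle x_i,x_i\rangle Bx=\langle Bx_i,x_i\rangle x$ for each $i$; summing over $i$ and using $e\cdot m=m$ on $\mathcal{M}$ produces
\[
Bx=\Bigl(\sum_{i=1}^{n}\langle Bx_i,x_i\rangle\Bigr)x=a\,x=T_a x,
\]
with $a:=\sum_{i=1}^{n}\langle Bx_i,x_i\rangle\in\mathcal{A}$. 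Since this holds for every $x\in\mathcal{M}$, we conclude $B=T_a$.

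The only mildly delicate point is choosing the right test operators and the right substitution; once one notices that Lemma~\ref{l2} converts the center condition into the scalar-like relation $f(z)Bx=f(Bz)x$, the partition-of-unity identity $\sum\langle x_i,x_i\rangle=e$ from Lemma~\ref{l1} (which is where fullness of $\mathcal{M}$ enters) immediately supplies the candidate $a$. Commutativity of $\mathcal{A}$ is not needed in this half of the argument but is essential both for $T_a$ to belong to $\text{End}_{\mathcal{A}}(\mathcal{M})$ and for the structural lemmas to apply.
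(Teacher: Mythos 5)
Your proposal is correct and follows essentially the same route as the paper: the easy inclusion via $\mathcal{A}$-linearity of $A$, and the reverse inclusion by letting the central element commute with the operators $\theta_{x,\hat{x_i}}$ and summing against the partition of unity $\sum_{i=1}^{n}\langle x_i,x_i\rangle=e$ from Lemma~\ref{l1}, which yields the same candidate $a=\sum_{i=1}^{n}\langle Bx_i,x_i\rangle$. The only cosmetic difference is that you first record the general identity $f(z)Bx=f(Bz)x$ before specializing, whereas the paper evaluates the two compositions on $x_i$ directly.
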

\begin{proof}
For each $A$ in End$_{\mathcal A}(\mathcal M)$ and $x$ in $\mathcal M$,
since $AT_ax=A(ax)=aAx=T_aAx$, we have $AT_a=T_aA$.
It is to say $T_a\in\mathcal Z($End$_{\mathcal A}(\mathcal M))$.

On the other hand,
assume $A\in\mathcal Z($End$_{\mathcal A}(\mathcal M))$.
By Lemma~\ref{l1},
there exists a sequence $\{x_i\}_{i=1}^n\subseteq\mathcal M$,
such that $\sum_{i=1}^n\langle x_i,x_i\rangle=e$.
Thus we have
$$\sum_{i=1}^nA\theta_{x,\hat{x_i}}x_i=\sum_{i=1}^n\langle x_i,x_i\rangle Ax=Ax,$$
and
$$\sum_{i=1}^n\theta_{x,\hat{x_i}}Ax_i=\sum_{i=1}^n\langle Ax_i,x_i\rangle x.$$
Let $\sum_{i=1}^n\langle Ax_i,x_i\rangle=a$.
Then we have $A=T_a$.
The proof is complete.
\end{proof}

For an algebra $\mathcal A$,
if for each $a$ in $\mathcal A$,
$a\mathcal A a=0$ implies that $a=0$,
then it is said to be \emph{semi-prime}.

\begin{lemma}\label{l5}
Let $\mathcal A$ be a C*-algebra and $\mathcal M$ be a Hilbert~$\mathcal A$-module.
Then End$_{\mathcal A}(\mathcal M)$ is a semi-prime Banach algebra.
\end{lemma}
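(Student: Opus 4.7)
The plan is to use the family of rank-one-style operators $\theta_{y,\hat x}$ from Lemma~\ref{l2} as testing elements. Suppose $A\in$ End$_{\mathcal A}(\mathcal M)$ satisfies $ATA=0$ for every $T\in$ End$_{\mathcal A}(\mathcal M)$; the goal is to deduce $A=0$.

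First I would compute the product $A\,\theta_{y,\hat x}\,A$ for arbitrary $x,y\in\mathcal M$. Two applications of Lemma~\ref{l2} (part~(2) on the left factor, then part~(1) on the right factor) collapse this product to $\theta_{Ay,\,\hat x\circ A}$. Evaluating at any $z\in\mathcal M$ and using $(\hat x\circ A)(z)=\langle Az,x\rangle$ then rewrites the hypothesis as the pointwise identity
$$\langle Az,x\rangle\, Ay=0\qquad\text{for all } x,y,z\in\mathcal M.$$

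Next I would specialize by taking $x=Az$ and $y=z$, which yields $\langle Az,Az\rangle\,Az=0$ in $\mathcal M$. Taking the $\mathcal A$-valued inner product of this vector with itself, and using properties~(3) and (4) of the inner product together with the self-adjointness of $\langle Az,Az\rangle$, I obtain $\langle Az,Az\rangle^{3}=0$ in $\mathcal A$. Since $\langle Az,Az\rangle$ is a positive element of the C*-algebra $\mathcal A$, the vanishing of its cube forces $\langle Az,Az\rangle=0$; hence $Az=0$ for every $z\in\mathcal M$, and so $A=0$.

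I do not anticipate a real obstacle here: the only nonobvious choice is the test family $\{\theta_{y,\hat x}\}$, after which Lemma~\ref{l2} and a standard positivity argument in $\mathcal A$ conclude the proof. In particular, neither commutativity of $\mathcal A$ nor fullness of $\mathcal M$ is used, which matches the generality of the statement.
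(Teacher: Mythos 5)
Your proposal is correct and follows essentially the same route as the paper: test the hypothesis against the operators $\theta_{y,\hat x}$, collapse $A\theta_{y,\hat x}A$ via Lemma~\ref{l2} to get $\langle Az,x\rangle Ay=0$, specialize to obtain $\langle Az,Az\rangle Az=0$, and conclude $\langle Az,Az\rangle^3=0$ hence $Az=0$ by self-adjointness/positivity. The only cosmetic difference is that the paper works with a general $f\in\mathcal M'$ before specializing to $f=\hat y$ with $y=Ax$, whereas you specialize to $\hat x$ from the start.
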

\begin{proof}
Let $A$ be in End$_{\mathcal A}(\mathcal M)$.
Assume that $ABA=0$ for each $B$ in End$_{\mathcal A}(\mathcal M)$.
In particular, for each $x\in \mathcal M$ and $f\in \mathcal M^{'}$,
we have
$$A\theta_{x,f}Ax=\theta_{Ax,f\circ A}x=f(Ax)Ax=0.$$
By taking $y=Ax$ and $f=\hat{y}$,
we have $\langle y,y\rangle y=0$.
It follows that
$$\langle\langle y,y\rangle y,\langle y,y\rangle y\rangle=\langle y,y\rangle^3=0.$$
Since $\langle y,y\rangle$ is a self-adjoint element,
we have $\langle y,y\rangle=0$,
and $y=0$. Hence $A=0$,
and End$_{\mathcal A}(\mathcal M)$ is semi-prime.
The proof is complete.
\end{proof}

\section{Derivations on End$_{\mathcal A}(\mathcal M)$}\

In this section, we study derivations on End$_{\mathcal A}(\mathcal M)$. We begin with several lemmas.

\begin{lemma}\label{l6}
Let $\mathcal A$ be a commutative unital C*-algebra and $\mathcal M$ be a full Hilbert~$\mathcal A$-module.
Then each derivation on $End_{\mathcal A}(\mathcal M)$ is $\mathcal A$-linear, i.e. $d(aA)=ad(A),$
for each $a\in \mathcal A$ and $A\in End_{\mathcal A}(\mathcal M)$.
\end{lemma}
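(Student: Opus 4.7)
The plan is to reduce $\mathcal{A}$-linearity of $d$ to the single identity $d(T_a)=0$ for every $a\in\mathcal{A}$, and then to obtain that identity by using that $d$ sends the center into itself together with the classical fact that a commutative C*-algebra admits no nonzero derivation into itself.

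First I would rewrite the module action in terms of the central operators of Lemma~\ref{l4}: for $a\in\mathcal{A}$ and $A\in$ End$_{\mathcal{A}}(\mathcal{M})$, the operator $aA$ is literally $T_a A$, and similarly $a\,d(A)=T_a d(A)$. Applying the Leibniz rule,
\[
d(aA)=d(T_a A)=d(T_a)A+T_a d(A)=d(T_a)A+a\,d(A),
\]
so the desired $\mathcal{A}$-linearity is equivalent to $d(T_a)A=0$ for all $A$. Setting $A=\mathrm{id}_{\mathcal{M}}$ reduces this in turn to the single statement $d(T_a)=0$ for every $a\in\mathcal{A}$.

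Next I would show that $d$ carries the center to itself. Since $T_a$ is central (Lemma~\ref{l4}), for any $A$ we have $T_aA=AT_a$; applying $d$ and cancelling $T_a d(A)=d(A)T_a$ gives $d(T_a)A=A\,d(T_a)$, so $d(T_a)\in\mathcal{Z}(\text{End}_{\mathcal{A}}(\mathcal{M}))$. By Lemma~\ref{l4} there exists a unique $\phi(a)\in\mathcal{A}$ with $d(T_a)=T_{\phi(a)}$. Linearity of $d$ gives linearity of $\phi$, and applying $d$ to $T_aT_b=T_{ab}$ yields $T_{\phi(ab)}=T_{\phi(a)b+a\phi(b)}$, i.e.\ $\phi$ is a linear derivation of the commutative C*-algebra $\mathcal{A}$.

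The final step, and the main obstacle, is to conclude that $\phi=0$. This is where the hypothesis that $\mathcal{A}$ is a (commutative) C*-algebra is used: such an algebra admits no nonzero derivation into itself. One can obtain this either by invoking Sakai's automatic continuity for C*-algebra derivations followed by the Singer--Wermer theorem (a continuous derivation of a commutative Banach algebra has range in the Jacobson radical, which is $\{0\}$ for a C*-algebra), or, if one prefers to avoid continuity considerations, by Thomas's theorem on possibly discontinuous derivations of commutative Banach algebras. Either way $\phi\equiv 0$, hence $d(T_a)=0$ for every $a\in\mathcal{A}$, and plugging back into the first paragraph gives $d(aA)=a\,d(A)$.
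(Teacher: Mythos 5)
Your proposal is correct and follows essentially the same route as the paper: both reduce the claim to $d(T_a)=0$ by showing $d$ maps the center $\{T_a : a\in\mathcal A\}$ into itself and then invoking the fact that a commutative C*-algebra has no nonzero derivations. Your version is slightly more explicit about why that fact applies (identifying the induced derivation $\phi$ on $\mathcal A$ and citing Singer--Wermer/Thomas), which is a welcome amplification of the paper's one-line appeal to the same result.
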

\begin{proof}
Suppose $d$ is a derivation on End$_{\mathcal A}(\mathcal M)$.

By Lemma \ref{l4}, we have
$\mathcal Z($End$_{\mathcal A}(\mathcal M))=\{T_a:~a\in\mathcal A\}$.
For each $A$ in End$_{\mathcal A}(\mathcal M)$,
By
$$d(T_aA)=d(T_a)A+T_ad(A)$$
and
$$d(AT_a)=Ad(T_a)+d(A)T_a,$$
we obtain $d(T_a)A=Ad(T_a)$.
Hence $d(T_a)\in\mathcal Z($End$_{\mathcal A}(\mathcal M))$,
and $d(\mathcal Z($End$_{\mathcal A}(\mathcal M)))\subseteq\mathcal Z($End$_{\mathcal A}(\mathcal M))$.

Since $\mathcal Z($End$_{\mathcal A}(\mathcal M))=\{T_a:~a\in\mathcal A\}$ is a commutative C*-algebra,
and every derivation on a commutative C*-algebra is zero,
we have $d(T_a)=0$.

It follows that
$$d(aA)=d(T_aA)=d(T_a)A+T_ad(A)=T_ad(A)=ad(A),$$
which means that $d$ is $\mathcal A$-linear.
The proof is complete.
\end{proof}

\begin{lemma}\label{l7}
Let $\mathcal A$ be a commutative unital C*-algebra and $\mathcal M$ be a full Hilbert~$\mathcal A$-module.
Then each derivation on End$_{\mathcal A}(\mathcal M)$ is continuous.
\end{lemma}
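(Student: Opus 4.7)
The plan is to apply the closed graph theorem, reducing the task to the following: assuming $A_n \to 0$ in End$_{\mathcal{A}}(\mathcal{M})$ with $d(A_n) \to B$, prove that $B = 0$.

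For arbitrary $x,y,z,w \in \mathcal{M}$, I first ``sandwich'' $A_n$ between two rank-one-type operators. By Lemma~\ref{l2}(2) and (3) (the latter invoking commutativity of $\mathcal{A}$), the product $\theta_{x,\hat{y}} A_n \theta_{z,\hat{w}}$ reduces to $\langle A_n z, y\rangle\, \theta_{x,\hat{w}}$, which converges to zero in norm. Crucially, by the $\mathcal{A}$-linearity of $d$ established in Lemma~\ref{l6},
\[
d\bigl(\langle A_n z, y\rangle\, \theta_{x,\hat{w}}\bigr) = \langle A_n z, y\rangle\, d(\theta_{x,\hat{w}}) \longrightarrow 0,
\]
because $\langle A_n z, y\rangle \to 0$ in $\mathcal{A}$ while $d(\theta_{x,\hat{w}})$ is a fixed element of End$_{\mathcal{A}}(\mathcal{M})$.

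On the other hand, expanding via the Leibniz rule and using $A_n \to 0$ to kill the two outer terms gives
\[
d\bigl(\theta_{x,\hat{y}} A_n \theta_{z,\hat{w}}\bigr) \longrightarrow \theta_{x,\hat{y}}\, B\, \theta_{z,\hat{w}}.
\]
Combining the two limits forces $\theta_{x,\hat{y}}\, B\, \theta_{z,\hat{w}} = 0$ for all $x,y,z,w \in \mathcal{M}$. Evaluating this operator identity at $v \in \mathcal{M}$ produces $\langle v, w\rangle \langle Bz, y\rangle\, x = 0$. Specializing $x = y = v = w = Bz$ yields $\langle Bz, Bz\rangle^{2}\, Bz = 0$, and taking one further $\mathcal{A}$-valued inner product with itself exactly as in the proof of Lemma~\ref{l5} gives $\langle Bz, Bz\rangle^{5} = 0$; positivity in the C*-algebra $\mathcal{A}$ then forces $\langle Bz, Bz\rangle = 0$, whence $Bz = 0$ for every $z$, so $B = 0$.

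The main obstacle is that one has no a priori handle on $d$ along a sequence converging to $0$ --- after all, continuity is exactly what is being proved. The device that breaks this impasse is Lemma~\ref{l6}, which lets one trade the opaque value $d(\langle A_n z, y\rangle \theta_{x,\hat{w}})$ for the transparent product $\langle A_n z, y\rangle \cdot d(\theta_{x,\hat{w}})$, thereby reducing the needed continuity to norm continuity of scalar multiplication in $\mathcal{A}$. The sandwich construction is precisely the mechanism that coerces $A_n$ into the form amenable to this reduction.
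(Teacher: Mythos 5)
Your proof is correct and follows essentially the same route as the paper's: the closed graph theorem reduction, the sandwich $\theta\, A_n\, \theta$ evaluated two ways (once via the $\mathcal A$-linearity from Lemma~\ref{l6}, once via the Leibniz rule), and a final inner-product argument to conclude $B=0$. The only cosmetic difference is that you work with the functionals $\hat y,\hat w$ while the paper uses general $f,g\in\mathcal M'$, which changes nothing of substance.
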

\begin{proof}
Suppose $d$ is a derivation on End$_{\mathcal A}(\mathcal M)$.
Assume that $\{T_n\}$ is a sequence converging to zero in End$_{\mathcal A}(\mathcal M)$,
and $\{d(T_n)\}$ converges to $T$.

According to the closed graph theorem,
to show $d$ is continuous,
it is sufficient to prove that $T=0$.

By Lemma \ref{l6}, we know $d$ is $\mathcal A$-linear.
For $x,y\in \mathcal M$,~$f,g\in \mathcal M^{'}$, we have
$$d(\theta_{x,f}T_n\theta_{y,g})=d(f(T_ny)\theta_{x,g})=f(T_ny)d(\theta_{x,g})\rightarrow0,$$
and
$$d(\theta_{x,f}T_n\theta_{y,g})=d(\theta_{x,f})T_n\theta_{y,g}+\theta_{x,f}d(T_n)\theta_{y,g}+\theta_{x,f}T_nd(\theta_{y,g}).$$
Since $\{T_n\}$ converges to zero and $\{d(T_n)\}$ converges to $T$,
we have $$d(\theta_{x,f}T_n\theta_{y,g})\rightarrow\theta_{x,f}T\theta_{y,g}=f(Ty)\theta_{x,g}.$$
It follows that $f(Ty)\theta_{x,g}=0.$

Let $a=f(Ty)$,
then we have $a\theta_{x,g}=\theta_{ax,g}=0$.
For each $z\in \mathcal M$,
we have 
\begin{align}
\theta_{ax,g}z=g(z)ax=0.\label{e1}
\end{align}
By taking $g=\widehat{ax}$ and $z=ax$ in \eqref{e1},
we can obtain $ax=0$,
i.e. 
\begin{align}
f(Ty)x=0.\label{e2}
\end{align}
By taking $f=\widehat{Ty}$ and $x=Ty$ in \eqref{e2},
we can obtain $Ty=0$.
i.e. $T=0$.
The proof is complete.
\end{proof}

Now we can prove our main theorem in this section.

\begin{theorem}\label{t1}
Let $\mathcal A$ be a commutative C*-algebra with unit $e$ and $\mathcal M$ be a full Hilbert~$\mathcal A$-module.
Then each derivation on End$_{\mathcal A}(\mathcal M)$ is an inner derivation.
\end{theorem}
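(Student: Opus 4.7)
My plan is to construct an explicit operator $S \in \text{End}_\mathcal A(\mathcal M)$ and show that $d = D_S$, i.e., $d(A) = SA - AS$ for every $A$. By Lemmas~\ref{l6} and~\ref{l7}, we may already assume $d$ is $\mathcal A$-linear and continuous. The crucial tool is the finite ``partition of unity'' from Lemma~\ref{l1}: a sequence $\{x_i\}_{i=1}^n \subseteq \mathcal M$ satisfying $\sum_{i=1}^n \langle x_i, x_i\rangle = e$, which plays the role of a frame that lets the rank-one-like operators $\theta_{x,\hat y}$ reconstruct the identity.

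The candidate operator is
\[
Sy := \sum_{i=1}^n d(\theta_{y,\hat{x_i}})\,x_i, \qquad y \in \mathcal M.
\]
First I would verify that $S$ belongs to $\text{End}_{\mathcal A}(\mathcal M)$. Linearity is clear. For $\mathcal A$-linearity, Lemma~\ref{l3}(4) gives $\theta_{ay,\hat{x_i}} = a\,\theta_{y,\hat{x_i}} = T_a\,\theta_{y,\hat{x_i}}$, so the $\mathcal A$-linearity of $d$ yields $d(\theta_{ay,\hat{x_i}}) = T_a\,d(\theta_{y,\hat{x_i}})$, and applying this operator to $x_i$ and summing gives $S(ay) = aSy$. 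Boundedness follows from the continuity of $d$: $\|Sy\| \le \sum_i \|d\|\,\|\theta_{y,\hat{x_i}}\|\,\|x_i\| \le (\|d\|\sum_i\|x_i\|^2)\,\|y\|$.

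Next I would verify directly that $d(A) = SA - AS$. Fix $A \in \text{End}_\mathcal A(\mathcal M)$ and $y \in \mathcal M$. Using Lemma~\ref{l2}(2) in the form $A\theta_{y,\hat{x_i}} = \theta_{Ay,\hat{x_i}}$, the derivation identity gives
\[
d(\theta_{Ay,\hat{x_i}}) = d(A\theta_{y,\hat{x_i}}) = d(A)\theta_{y,\hat{x_i}} + A\,d(\theta_{y,\hat{x_i}}).
\]
Therefore
\[
(SA - AS)y = \sum_{i=1}^n \bigl[d(\theta_{Ay,\hat{x_i}}) - A\,d(\theta_{y,\hat{x_i}})\bigr] x_i = d(A)\sum_{i=1}^n \theta_{y,\hat{x_i}} x_i.
\]
Finally, $\theta_{y,\hat{x_i}} x_i = \langle x_i,x_i\rangle y$, so $\sum_i \theta_{y,\hat{x_i}} x_i = \bigl(\sum_i \langle x_i,x_i\rangle\bigr) y = ey = y$, and we conclude $(SA - AS)y = d(A)y$, i.e., $d = D_S$.

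I do not anticipate a serious obstacle: once the previous two lemmas are in hand, the frame identity from Lemma~\ref{l1} makes the whole argument almost formal. The only mildly delicate point is recognizing the right definition of $S$ and pushing the $\mathcal A$-linearity through the sum, which is where the commutativity of $\mathcal A$ enters (via Lemma~\ref{l3}(4)) and where the $\mathcal A$-linearity of $d$ established in Lemma~\ref{l6} gets used crucially.
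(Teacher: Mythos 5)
Your proposal is correct and is essentially identical to the paper's proof: the paper defines the same operator $Tx=\sum_{i=1}^n d(\theta_{x,\hat{x_i}})x_i$ from the frame of Lemma~\ref{l1} and runs the same computation $TAx = d(A)x + ATx$ via $A\theta_{x,\hat{x_i}}=\theta_{Ax,\hat{x_i}}$ and the derivation identity. Your extra verification that $S$ is $\mathcal A$-linear and bounded is a welcome elaboration of a step the paper dispatches in one line using Lemmas~\ref{l6} and~\ref{l7}.
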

\begin{proof}
Suppose $d$ is a derivation on End$_{\mathcal A}(\mathcal M)$ and
$\{x_i\}_{i=1}^n$ is a sequence in $\mathcal M$ such that $\sum_{i=1}^n\langle x_i,x_i\rangle=e$.

Define a mapping $T$ from $\mathcal M$ into itself by follows:
$$Tx=\sum_{i=1}^nd(\theta_{x,x_i})x_i,$$ 
for all $x\in \mathcal M.$

By Lemmas \ref{l6} and \ref{l7},
$d$ is $\mathcal A$-linear and continuous,
thus $T$ is also $\mathcal A$-linear and continuous.
That is to say $T\in $ End$_{\mathcal A}(\mathcal M)$.

Now it is sufficient to show that $d(A)=TA-AT$, for each $A\in $ End$_{\mathcal A}(\mathcal M)$.

For each $x\in \mathcal M$,
we have
\begin{align*}
TAx&=\sum_{i=1}^nd(\theta_{Ax,x_i})x_i\\
&=\sum_{i=1}^nd(A\theta_{x,x_i})x_i\\
&=\sum_{i=1}^nd(A)\theta_{x,x_i}x_i+\sum_{i=1}^nAd(\theta_{x,x_i})x_i\\
&=d(A)\sum_{i=1}^n\langle x_i,x_i\rangle x+A\sum_{i=1}^nd(\theta_{x,x_i})x_i\\
&=d(A)x+ATx.
\end{align*}

It implies that $d(A)=TA-AT$.
Hence $d$ is an inner derivation.
The proof is complete.
\end{proof}

\section{2-Local derivations on End$_{\mathcal A}(\mathcal M)$ and End$^*_{\mathcal A}(\mathcal M)$}\

In this section, we characterize 2-local derivations on End$_{\mathcal A}(\mathcal M)$ and End$^*_{\mathcal A}(\mathcal M)$.
Firstly, we show the following lemma.

\begin{lemma}\label{l8}
Let $\mathcal A$ be a commutative unital C*-algebra and $\mathcal M$ be a Hilbert~$\mathcal A$-module.
For $x_i\in\mathcal M$ and $f_i\in \mathcal M^{'}$,
if~ $\sum_{i=1}^n\theta_{x_i,f_i}=0$,
then $\sum_{i=1}^nf_i(x_i)=0$.
\end{lemma}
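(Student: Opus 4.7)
The plan is to recast the hypothesis as a nilpotency statement for an $n\times n$ matrix over $\mathcal{A}$, and then conclude via the Gelfand representation of $\mathcal{A}$.

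First, I would unfold the definitions: $\sum_{i=1}^n \theta_{x_i,f_i}=0$ is equivalent to the pointwise identity
\[
\sum_{i=1}^n f_i(y)\,x_i \;=\; 0 \qquad \text{for every } y\in\mathcal{M}.
\]
It is convenient to package the $f_i$ and $x_i$ into two bounded $\mathcal{A}$-linear maps $T:\mathcal{M}\to\mathcal{A}^n$, $T(y)=(f_1(y),\dots,f_n(y))$, and $S:\mathcal{A}^n\to\mathcal{M}$, $S((a_i))=\sum_i a_i x_i$. Then $ST=\sum_i\theta_{x_i,f_i}=0$, while the composition in the opposite order is an $n\times n$ matrix $A\in M_n(\mathcal{A})$ with entries $A_{ij}=f_i(x_j)$. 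The key observation is that $A^2=TSTS=T(ST)S=0$, so $A$ is nilpotent. (The same identity $(A^2)_{jk}=\sum_i f_j(x_i)f_i(x_k)=0$ can also be obtained explicitly by applying each $\mathcal{A}$-linear $f_j$ to $\sum_i f_i(y)x_i=0$ and then specializing $y=x_k$; this is the only place where commutativity of $\mathcal{A}$ is used, in order to pull the scalar $f_i(y)\in\mathcal{A}$ past $f_j$.)

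The second step, which I expect to be the main obstacle, is to deduce $\mathrm{tr}(A)=\sum_i f_i(x_i)=0$ from $A^2=0$. Note that the analogous implication fails over a general commutative ring (for instance $a=2\in\mathbb{Z}/4$ satisfies $a^2=0$ yet has nonzero trace), so the $C^*$-algebra structure must be used. For this I would invoke the Gelfand representation $\mathcal{A}\cong C(X)$ for some compact Hausdorff $X$; then $M_n(\mathcal{A})\cong C(X,M_n(\mathbb{C}))$, and for every $t\in X$ the matrix $A(t)\in M_n(\mathbb{C})$ satisfies $A(t)^2=0$, hence is a nilpotent complex matrix, and so $\mathrm{tr}(A(t))=0$ by classical linear algebra.

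Assembling the pieces, the continuous function $t\mapsto \mathrm{tr}(A(t))$ is identically zero, hence $\mathrm{tr}(A)=0$ in $C(X)\cong\mathcal{A}$. Since $\mathrm{tr}(A)=\sum_{i=1}^n A_{ii}=\sum_{i=1}^n f_i(x_i)$, this is exactly the desired conclusion.
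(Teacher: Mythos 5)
Your proposal is correct and follows essentially the same route as the paper: form the matrix $\Lambda=(f_j(x_i))\in M_n(\mathcal A)$, show $\Lambda^2=0$ by applying the $\mathcal A$-linear functionals to the identity $\sum_i f_i(y)x_i=0$, then pass to the Gelfand representation $\mathcal A\cong C(X)$ and use that a square-zero complex matrix has zero trace. Your packaging of the nilpotency via $ST=0\Rightarrow (TS)^2=0$ and your remark that the trace implication fails over general commutative rings are nice touches, but the substance is identical to the paper's argument.
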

\begin{proof}
Let $a_{i,j}=f_j(x_i)\in\mathcal A$
and $\Lambda=(a_{i,j})_{n\times n}\in M_n(\mathcal A)$.

We have
\begin{align*}
&\sum_{i=1}^nf_i(x_k)x_i=\sum_{i=1}^n\theta_{x_i,f_i}x_k=0\\
\Rightarrow &\sum_{i=1}^nf_i(x_k)f_j(x_i)=f_j(\sum_{i=1}^nf_i(x_k)x_i)=0\\
\Rightarrow &\sum_{i=1}^na_{k,i}a_{i,j}=0\\
\Rightarrow &\Lambda^2=0.
\end{align*}

Since $\mathcal A$ is a commutative unital C*-algebra,
it is well known that $\mathcal A$ is $*$-isomorphic to $C(\mathcal S)$ for some compact Hausdorff space $\mathcal S$.
Without loss of generality, we can assume $\mathcal A=C(\mathcal S)$.

Then for each $t\in \mathcal S$,
we have $a_{i,j}(t)\in\mathbb{C}$ and $\Lambda(t),\Lambda^2(t) \in M_n(\mathbb{C})$.

Recall that for a matrix $A$ in $M_n(\mathbb{C})$,
$A^2=0$ implies that $tr(A)=0$,
where $tr(A)$ denotes the trace of $A$,
i.e. the sum of all the diagonal elements.

Hence $\Lambda^2(t)=0$ implies that $tr(\Lambda(t))=0$.
It follows that $tr(\Lambda)=0$,
that is to say $\sum_{i=1}^nf_i(x_i)=0$.
The proof is complete.
\end{proof}

\begin{theorem}\label{t2}
Let $\mathcal A$ be a commutative unital C*-algebra and $\mathcal M$ be a full Hilbert~$\mathcal A$-module.
Then each 2-local derivation on End$_{\mathcal A}(\mathcal M)$ is a derivation.
\end{theorem}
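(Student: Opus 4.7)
The plan is to mimic the proof of Theorem~\ref{t1}. By that theorem every derivation on End$_{\mathcal{A}}(\mathcal{M})$ is inner, so for each pair $(A, B)$ the 2-local hypothesis furnishes $T_{A,B}\in$ End$_{\mathcal{A}}(\mathcal{M})$ with $\delta(A) = [T_{A,B}, A]$ and $\delta(B) = [T_{A,B}, B]$. Homogeneity of $\delta$ is immediate: applying 2-locality to $(A, \lambda A)$ yields $\delta(\lambda A) = d_{A,\lambda A}(\lambda A) = \lambda d_{A,\lambda A}(A) = \lambda \delta(A)$, and $\mathcal{A}$-homogeneity follows similarly using Lemma~\ref{l6}.

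Using the resolution $\{x_i\}_{i=1}^n$ from Lemma~\ref{l1} with $\sum_{i=1}^n \langle x_i, x_i\rangle = e$, the natural candidate implementer is
\[
Tx := \sum_{i=1}^n \delta(\theta_{x, \hat{x_i}}) x_i, \qquad x \in \mathcal{M}.
\]
For each triple $(A, x, i)$, the 2-local hypothesis applied to $(A, \theta_{x, \hat{x_i}})$ produces an implementer $S = S(A, x, i)$ with $\delta(A) = [S, A]$ and $\delta(\theta_{x, \hat{x_i}}) = [S, \theta_{x, \hat{x_i}}]$; using Lemma~\ref{l3} one computes
\[
\delta(\theta_{x, \hat{x_i}}) x_i = [S, \theta_{x, \hat{x_i}}] x_i = \langle x_i, x_i\rangle Sx - \langle S x_i, x_i\rangle x.
\]
To verify $\delta(A) = TA - AT$ I would follow the telescoping computation in Theorem~\ref{t1}: $TAx = \sum_i \delta(\theta_{Ax, \hat{x_i}}) x_i = \sum_i \delta(A\theta_{x, \hat{x_i}}) x_i$ by Lemma~\ref{l3}(3), and applying the Leibniz rule of the pairwise derivation $d_{A, \theta_{x, \hat{x_i}}}$ to each summand should collapse the sum to $\delta(A)x + ATx$.

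The main obstacle is twofold. First, the implementer $S(A, x, i)$ varies with $i$, and 2-locality does not supply a single $S$ that works for the whole family $\{A, \theta_{x,\hat{x_1}}, \ldots, \theta_{x,\hat{x_n}}\}$. Second, the Leibniz rule for $d_{A, \theta_{x, \hat{x_i}}}$ computes $d_{A, \theta_{x, \hat{x_i}}}(A\theta_{x,\hat{x_i}})$, not $\delta(A\theta_{x,\hat{x_i}})$, since 2-locality only matches $\delta$ with $d_{A, \theta_{x, \hat{x_i}}}$ on the two input operators, not on their product. Both issues generate cross terms built from differences of implementers, and the same difficulty arises in proving additivity of $T$ in $x$. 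I expect Lemma~\ref{l8}, which converts identities of the form $\sum_k \theta_{y_k, f_k} = 0$ into scalar identities $\sum_k f_k(y_k) = 0$ in $\mathcal{A}$, to be the decisive tool for showing that these cross terms cancel after summation over $i$; semi-primeness of End$_{\mathcal{A}}(\mathcal{M})$ from Lemma~\ref{l5} should then pin down $T$ modulo the centre. Verifying that this cancellation genuinely occurs — so that $T$ is $\mathcal{A}$-linear and bounded and $\delta(A) = TA - AT$ for every $A$ — is the heart of the argument.
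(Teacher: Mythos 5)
There is a genuine gap: your proposal sets up the right candidate objects but does not, and as structured cannot, close the argument. The two obstacles you name yourself are fatal to the direct-implementer strategy. The operator $Tx=\sum_i\delta(\theta_{x,\hat{x_i}})x_i$ is not even known to be additive in $x$, because that would require $\delta(\theta_{x+y,\hat{x_i}})=\delta(\theta_{x,\hat{x_i}})+\delta(\theta_{y,\hat{x_i}})$, i.e.\ precisely the additivity of $\delta$ you are trying to establish; and the telescoping step needs $\delta(A\theta_{x,\hat{x_i}})=d(A)\theta_{x,\hat{x_i}}+Ad(\theta_{x,\hat{x_i}})$ for a single derivation $d$, which 2-locality does not provide since it only controls $\delta$ at the two chosen points and not at their product. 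Your hope that the cross terms ``cancel after summation over $i$'' via Lemma~\ref{l8} is not substantiated, and it is not how Lemma~\ref{l8} is actually used.

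The paper's route is different and avoids constructing an implementer for $\delta$ altogether. Lemma~\ref{l8} is used to show that the trace-like functional $\phi\bigl(\sum_i\theta_{x_i,f_i}\bigr)=\sum_i f_i(x_i)$ is well defined on the ideal $\Gamma(\mathcal M)=\mathrm{span}\{\theta_{x,f}\}$ and satisfies $\phi(SA)=\phi(AS)$. For each pair $(A,S)$ with $S\in\Gamma(\mathcal M)$, 2-locality together with Theorem~\ref{t1} gives an inner derivation $D_T$ agreeing with $\delta$ at $A$ and $S$, whence $\phi(\delta(A)S+A\delta(S))=\phi(TAS-AST)=0$, i.e.\ $\phi(\delta(A)S)=-\phi(A\delta(S))$. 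Since the right-hand side is additive in $A$, one gets $\phi(CS)=0$ for $C=\delta(A+B)-\delta(A)-\delta(B)$ and all $S$, and choosing $S=\theta_{x,f}$ with $f=\widehat{Cx}$ forces $C=0$. Additivity plus the easy identities $\delta(\lambda A)=\lambda\delta(A)$ and $\delta(A^2)=A\delta(A)+\delta(A)A$ make $\delta$ a Jordan derivation, and Cusack's theorem on the semi-prime algebra $\mathrm{End}_{\mathcal A}(\mathcal M)$ (Lemma~\ref{l5}) finishes the proof. If you want to salvage your write-up, replace the implementer construction with this trace-functional argument; the pieces you flagged (Lemma~\ref{l8}, Lemma~\ref{l5}, innerness from Theorem~\ref{t1}) are indeed the right ingredients, but they enter through $\phi$ and the Jordan-derivation reduction, not through cancellation of cross terms in a telescoping sum.
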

\begin{proof}
Denote by $\Gamma(\mathcal M)$ the linear span of the set $\{\theta_{x,f}:x\in\mathcal M,f\in\mathcal M^{'}\}$.
By Lemma \ref{l2}, $\Gamma(\mathcal M)$ is a two-side ideal of End$_{\mathcal A}(\mathcal M)$.

For each $S=\sum_{i=1}^n\theta_{x_i,f_i}\in\Gamma(\mathcal M)$,
define $\phi(S)=\sum_{i=1}^nf_i(x_i)$.

One can verify that $\phi$ is well defined by Lemma \ref{l8}.
And obviously, $\phi$ is $\mathcal A$-linear.
Moreover, for each $A\in $ End$_{\mathcal A}(\mathcal M)$, we have
$$\phi(\theta_{x,f}A)=\phi(\theta_{x,f\circ A})=f(Ax)=\phi(\theta_{Ax,f})=\phi(A\theta_{x,f}).$$

It follows that $\phi(SA)=\phi(AS)$ for each $A\in$ End$_{\mathcal A}(\mathcal M)$ and $S\in\Gamma(\mathcal M)$.

Suppose $\delta$ is a 2-local derivation on End$_{\mathcal A}(\mathcal M)$.
By the definition of 2-local derivation,
there exists a derivation $d$ on End$_{\mathcal A}(\mathcal M)$ such that
$\delta(A)=d(A)$ and $\delta(S)=d(S)$.
By Theorem \ref{t1},
$d$ is an inner derivation,
i.e. there exists an element $T\in$ End$_{\mathcal A}(\mathcal M)$ such that $d=D_T$.

Thus we have
$$\delta(A)S+A\delta(S)=d(A)S+Ad(S)=d(AS)=D_T(AS)=TAS-AST.$$

Since $\Gamma(\mathcal M)$ is a two-side ideal of End$_{\mathcal A}(\mathcal M)$,
we know that $AS\in\Gamma(\mathcal M)$.

Hence
$$\phi(\delta(A)S+A\delta(S))=\phi(TAS-AST)=0,$$
which follows that $\phi(\delta(A)S)=-\phi(A\delta(S))$.

Now, for each $A,B\in$ End$_{\mathcal A}(\mathcal M)$ and $S\in\Gamma(\mathcal M)$,
we have
\begin{align*}
\phi(\delta(A+B)S)&=-\phi((A+B)\delta(S))\\
&=-\phi(A\delta(S))-\phi(B\delta(S))\\
&=\phi(\delta(A)S)+\phi(\delta(B)S)\\
&=\phi((\delta(A)+\delta(B))S).
\end{align*}

Let $C=\delta(A+B)-\delta(A)-\delta(B)$,
we obtain $\phi(CS)=0$.

By taking $S=\theta_{x,f}$,
we have
$$\phi(C\theta_{x,f})=f(Cx)=0\Rightarrow\langle Cx,Cx\rangle=0\Rightarrow Cx=0\Rightarrow C=0.$$

It means that $\delta(A+B)=\delta(A)+\delta(B)$.
That is to say $\delta$ is an additive mapping.
In addition, by the definition of 2-local derivation, it is easy to show that $\delta$ is homogeneous and $\delta(A^2)=A\delta(A)+\delta(A)A$
for each $A\in$ End$_{\mathcal A}(\mathcal M)$.
Hence $\delta$ is a Jordan derivation.

By Lemma \ref{l5},
End$_{\mathcal A}(\mathcal M)$ is a semi-prime Banach algebra.
According to the classical result that every Jordan derivation on a semi-prime Banach algebra
is a derivation \cite{Cusack}, we obtain that $\delta$ is a derivation.
The proof is complete.
\end{proof}

\begin{theorem}\label{t3}
Let $\mathcal A$ be a commutative unital C*-algebra and $\mathcal M$ be a full Hilbert~$\mathcal A$-module.
Then each 2-local derivation on End*$_{\mathcal A}(\mathcal M)$ is a derivation.
\end{theorem}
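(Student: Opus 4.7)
The plan is to mirror the proof of Theorem~\ref{t2}, substituting the ideal $\Gamma(\mathcal M)$ by its adjointable counterpart. Specifically, I would let $\Gamma^{*}(\mathcal M)$ be the linear span of $\{\theta_{x,\hat{y}} : x,y \in \mathcal M\}$. By Lemma~\ref{l3}(1) each $\theta_{x,\hat{y}}$ lies in End$^{*}_{\mathcal A}(\mathcal M)$, and parts (2)--(3) of that lemma show that $\Gamma^{*}(\mathcal M)$ is a two-sided ideal of End$^{*}_{\mathcal A}(\mathcal M)$.

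Next I would define $\phi : \Gamma^{*}(\mathcal M) \to \mathcal A$ by
$$
\phi\Bigl(\sum_{i=1}^{n} \theta_{x_i,\hat{y_i}}\Bigr) = \sum_{i=1}^{n} \langle x_i, y_i\rangle.
$$
Well-definedness is immediate from Lemma~\ref{l8} applied with $f_i = \hat{y_i}$, since $\hat{y_i}(x_i) = \langle x_i, y_i\rangle$. The trace identity $\phi(SA) = \phi(AS)$ for $A \in$ End$^{*}_{\mathcal A}(\mathcal M)$ and $S \in \Gamma^{*}(\mathcal M)$ reduces, by linearity and Lemma~\ref{l3}(2),(3), to the adjoint relation $\langle Ax, y\rangle = \langle x, A^{*}y\rangle$, and hence holds.

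With $\phi$ in hand, I would invoke the Li--Han--Tang theorem \cite{Lipengtong} from the introduction: every derivation on End$^{*}_{\mathcal A}(\mathcal M)$ is inner. The rest transposes the proof of Theorem~\ref{t2}. For a 2-local derivation $\delta$ and any $A \in$ End$^{*}_{\mathcal A}(\mathcal M)$, $S \in \Gamma^{*}(\mathcal M)$, pick an inner derivation $D_T$ agreeing with $\delta$ at $A$ and $S$; since $AS \in \Gamma^{*}(\mathcal M)$, applying $\phi$ to $\delta(A)S + A\delta(S) = TAS - AST$ and using the trace property yields $\phi(\delta(A)S) = -\phi(A\delta(S))$. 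Then with $C = \delta(A+B) - \delta(A) - \delta(B)$, the same computation forces $\phi(CS) = 0$ for every $S \in \Gamma^{*}(\mathcal M)$; taking $S = \theta_{x,\widehat{Cx}}$ gives $\langle Cx, Cx\rangle = 0$, so $C = 0$. Homogeneity of $\delta$ and the Jordan identity $\delta(A^2) = A\delta(A) + \delta(A)A$ follow from the 2-local definition exactly as in Theorem~\ref{t2}, so $\delta$ is a Jordan derivation.

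To conclude, I would again apply Cusack's theorem \cite{Cusack}: End$^{*}_{\mathcal A}(\mathcal M)$ is a C*-algebra and hence semi-prime, so every Jordan derivation on it is a derivation. The step deserving most care is the verification that $\phi$ is well defined and trace-like on the smaller ideal $\Gamma^{*}(\mathcal M)$; once that is set up, the argument is a routine transposition of Theorem~\ref{t2}.
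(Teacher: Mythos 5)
Your proposal is correct and follows essentially the same route as the paper: the same ideal $\Gamma^{*}(\mathcal M)$, the same trace functional $\phi$ with well-definedness via Lemma~\ref{l8} and the trace identity via Lemma~\ref{l3}, innerness from \cite{Lipengtong}, and then the additivity/Jordan/Cusack argument transposed from Theorem~\ref{t2}. The paper simply omits the transposed portion with ``the rest is similar,'' and your filled-in details (including the choice $S=\theta_{x,\widehat{Cx}}$ and the observation that a C*-algebra is semi-prime) are exactly what that omission requires.
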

\begin{proof}
Denote by $\Gamma^*(\mathcal M)$ the linear span of the set $\{\theta_{x,\hat{y}}:x,y\in\mathcal M\}$.
By Lemma \ref{l3}, $\Gamma^*(\mathcal M)$ is a two-side ideal of End$^*_{\mathcal A}(\mathcal M)$.

For each $S=\sum_{i=1}^n\theta_{x_i,\widehat{y_i}}\in\Gamma^*(\mathcal M)$,
define $\phi(S)=\sum_{i=1}^n\langle x_i,y_i\rangle$.

By Lemma \ref{l8}, $\phi$ is well defined.
For each $A\in $ End$^*_{\mathcal A}(\mathcal M)$, we have
$$\phi(\theta_{x,\widehat{y}}A)=\phi(\theta_{x,\widehat{A^*y}})=\langle x,A^*y\rangle=\langle Ax,y\rangle=\phi(\theta_{Ax,\widehat{y}})=\phi(A\theta_{x,\widehat{y}}).$$

It follows that $\phi(SA)=\phi(AS)$ for each $A\in$ End$^*_{\mathcal A}(\mathcal M)$ and $S\in\Gamma^*(\mathcal M)$.

In \cite{Lipengtong}, the authors prove that for a commutative unital C*-algebra $\mathcal A$ and a full Hilbert~$\mathcal A$-module $\mathcal M$,
each derivation on End$^*_{\mathcal A}(\mathcal M)$ is an inner derivation.

The rest of the proof is similar to Theorem \ref{t2}, so we omit it.
\end{proof}

\section{Local derivations on End$_{\mathcal A}(\mathcal M)$}\

In this section, we discuss local derivations on End$_{\mathcal A}(\mathcal M)$.
Through this section, we assume that $\mathcal A$ is a commutative C*-algebra with unit $e$,
and $\mathcal M$ is a Hilbert~$\mathcal A$-module, and moreover, there exist $x_0$ in $\mathcal M$
and $f_0$ in $\mathcal M^{'}$ such that $f_0(x_0)=e$. Denote the unit of End$_{\mathcal A}(\mathcal M)$ by $I$.
Define
$\mathcal{L}=span\{\theta_{x,f_0}:x\in\mathcal M\}$, and $\mathcal{R}=span\{\theta_{x_0,f}:f\in\mathcal M^{'}\}.$

\begin{lemma}\label{l9}
~\\
(1) $\theta_{x_0,f_0}$ is an idempotent;\\
(2) each element in $\mathcal{L}$ is an $\mathcal A$-linear combination of some idempotents in $\mathcal{L}$,
and each element in $\mathcal{R}$ is an $\mathcal A$-linear combination of some idempotents in $\mathcal{R}$;\\
(3) $\mathcal{L}$ is a left ideal of $End_{\mathcal A}(\mathcal M)$,
and $\mathcal{R}$ is a right ideal of $End_{\mathcal A}(\mathcal M)$;\\
(4) $\mathcal{L}$ is a left separating set of End$_{\mathcal A}(\mathcal M)$, 
 i.e. for each $A$ in End$_{\mathcal A}(\mathcal M)$, $A\mathcal{L}=0$ implies that $A=0$,
and $\mathcal{R}$ is a right separating set of End$_{\mathcal A}(\mathcal M)$,
i.e. for each $A$ in End$_{\mathcal A}(\mathcal M)$, $\mathcal{R}A=0$ implies that $A=0$
\end{lemma}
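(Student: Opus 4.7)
The plan is to verify each of the four parts directly from Lemma~\ref{l2}, using everywhere the single algebraic identity $f_0(x_0)=e$, which makes $\theta_{x_0,f_0}$ play the role of a canonical local unit around which everything can be normalized.

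Part (1) is a one-line computation: Lemma~\ref{l2}(3) yields $\theta_{x_0,f_0}^2 = f_0(x_0)\,\theta_{x_0,f_0} = e\cdot\theta_{x_0,f_0} = \theta_{x_0,f_0}$. Part (3) is also immediate from Lemma~\ref{l2}: for any $A\in\mathrm{End}_{\mathcal A}(\mathcal M)$, $A\theta_{x,f_0}=\theta_{Ax,f_0}\in\mathcal L$ and $\theta_{x_0,f}A=\theta_{x_0,f\circ A}\in\mathcal R$.

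For part (2), the key observation is that $\theta_{y,f_0}^2=f_0(y)\theta_{y,f_0}$, so $\theta_{y,f_0}$ is automatically an idempotent whenever $f_0(y)=e$. Given an arbitrary $\theta_{x,f_0}\in\mathcal L$, I would set $y:=x+(e-f_0(x))x_0$; by $\mathcal A$-linearity of $f_0$ one checks $f_0(y)=f_0(x)+(e-f_0(x))e=e$, so $P:=\theta_{y,f_0}$ is an idempotent in $\mathcal L$. Expanding via Lemma~\ref{l2}(3) gives $P=\theta_{x,f_0}+(e-f_0(x))\theta_{x_0,f_0}$, which rearranges to
\[
\theta_{x,f_0}=P+(f_0(x)-e)\theta_{x_0,f_0},
\]
the desired $\mathcal A$-linear combination of two idempotents in $\mathcal L$. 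For $\mathcal R$, the dual move works: given $\theta_{x_0,f}\in\mathcal R$, set $g:=f+(e-f(x_0))f_0$, so that $g(x_0)=e$ and $\theta_{x_0,g}$ is idempotent; using commutativity of $\mathcal A$ to justify $a\,\theta_{x_0,f_0}=\theta_{x_0,af_0}\in\mathcal R$, one writes $\theta_{x_0,f}=\theta_{x_0,g}+(f(x_0)-e)\theta_{x_0,f_0}$.

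For part (4), if $A\mathcal L=0$ then $\theta_{Ax,f_0}=A\theta_{x,f_0}=0$ for every $x\in\mathcal M$; evaluating at $x_0$ gives $f_0(x_0)Ax=Ax=0$, hence $A=0$. If instead $\mathcal R A=0$, then for every $f\in\mathcal M'$ and every $y\in\mathcal M$ we have $0=(\theta_{x_0,f}A)(y)=\theta_{x_0,f\circ A}(y)=f(Ay)x_0$; applying $f_0$ to both sides extracts $f(Ay)\,f_0(x_0)=f(Ay)=0$, and then taking $f=\widehat{Ay}$ yields $\langle Ay,Ay\rangle=0$, so $Ay=0$ and $A=0$.

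The only conceptual step is part (2): one must notice that $\theta_{y,f_0}$ is an idempotent precisely when $f_0(y)=e$ (equivalently $f_0(y)y=y$), and then see that the $\ker f_0$-shift $y=x+(e-f_0(x))x_0$ normalizes an arbitrary $x$ while only costing one extra summand, namely a scalar multiple of the fixed idempotent $\theta_{x_0,f_0}$. Once that normalization is in hand, the rest of the lemma reduces to direct unpacking of Lemma~\ref{l2}.
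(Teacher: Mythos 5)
Your proof is correct, and parts (1), (3), and (4) coincide with the paper's argument essentially word for word. The interesting divergence is in part (2). The paper normalizes \emph{multiplicatively}: it picks a nonzero scalar $\lambda$ so that $a^{-1}:=e-\lambda f_0(x)$ is invertible in $\mathcal A$, checks that $f_0\bigl(a(x_0-\lambda x)\bigr)=e$, and writes $\theta_{x,f_0}=\lambda^{-1}\theta_{x_0,f_0}-\lambda^{-1}a^{-1}\theta_{a(x_0-\lambda x),f_0}$. You normalize \emph{additively}: the shift $y=x+(e-f_0(x))x_0$ satisfies $f_0(y)=e$ with no invertibility hypothesis at all, giving the two-term identity $\theta_{x,f_0}=\theta_{y,f_0}+(f_0(x)-e)\theta_{x_0,f_0}$. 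Your route is cleaner in that it avoids the (unstated in the paper) spectral argument needed to guarantee that $e-\lambda f_0(x)$ is invertible for some $\lambda\neq 0$, and it produces coefficients that are explicit elements of $\mathcal A$ rather than inverses; both decompositions are $\mathcal A$-linear combinations of idempotents in $\mathcal L$ (resp.\ $\mathcal R$), which is all that Lemma~\ref{l10} requires downstream. The one small thing worth making explicit, which you do flag, is that $a\theta_{x_0,f_0}=\theta_{ax_0,f_0}=\theta_{x_0,af_0}$ stays inside $\mathcal R$ (and inside $\mathcal L$ for the other case), using Lemma~\ref{l2}(3) and commutativity of $\mathcal A$; with that noted, the argument is complete.
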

\begin{proof}
(1) $\theta_{x_0,f_0}\theta_{x_0,f_0}=f_0(x_0)\theta_{x_0,f_0}=\theta_{x_0,f_0}$.\\
(2) For each $x\in\mathcal M$,
there exists a non-zero complex number $\lambda\in\mathbb{C}$,
such that $e-\lambda f_0(x)$ is invertible in $\mathcal A$.
Denote $e-\lambda f_0(x)$ by $a^{-1}$, then we have
$$f_0(a(x_0-\lambda x))=af_0(x_0-\lambda x)=a(e-\lambda f_0(x))=aa^{-1}=e.$$

By (1), we know that $\theta_{a(x_0-\lambda x),f_0}$ is an idempotent.

Thus we have
$$\theta_{x,f_0}=\lambda^{-1}\theta_{x_0,f_0}-\lambda^{-1}a^{-1}\theta_{a(x_0-\lambda x),f_0}.$$

That is to say $\theta_{x,f_0}$ is an $\mathcal A$-linear combination of idempotents in $\mathcal{L}$.

Similarly, for each $f\in\mathcal M^{'}$,
there exists a non-zero complex number $\lambda\in\mathbb{C}$,
such that $e-\lambda f(x_0)$ is invertible in $\mathcal A$.
Denote $e-\lambda f(x_0)$ by $a^{-1}$, then we have
$$(a(f_0-\lambda f))(x_0)=a(e-\lambda f(x_0))=aa^{-1}=e.$$

Again by (1), we know that $\theta_{x_0,a(f_0-\lambda f)}$ is an idempotent.

Thus we have
$$\theta_{x_0,f}=\lambda^{-1}\theta_{x_0,f_0}-\lambda^{-1}a^{-1}\theta_{x_0,a(f_0-\lambda f)}.$$\\
(3) For each $A\in$ End$_{\mathcal A}(\mathcal M),$
since $A\theta_{x,f_0}=\theta_{Ax,f_0}$, we know that $\mathcal{L}$ is a left ideal of End$_{\mathcal A}(\mathcal M)$.
Similarly,
$\mathcal{R}$ is a right ideal of End$_{\mathcal A}(\mathcal M)$
since $\theta_{x_0,f}A=\theta_{x_0,f\circ A}$.\\
(4) Suppose $A\in$ End$_{\mathcal A}(\mathcal M),\theta_{x,f_0}\in\mathcal L,\theta_{x_0,f}\in\mathcal R$.

If $A\theta_{x,f_0}=0$,
then
$$0=A\theta_{x,f_0}x_0=\theta_{Ax,f_0}x_0=f_0(x_0)Ax=Ax,$$
i.e. $A=0$.

If $\theta_{x_0,f}A=0$,
then
\begin{align*}
&\theta_{x_0,f}Ax=f(Ax)x_0=0\\
\Rightarrow &f_0(f(Ax)x_0)=f(Ax)f_0(x_0)=f(Ax)=0\\
\Rightarrow &\langle Ax,Ax\rangle=0\\
\Rightarrow &Ax=0\\
\Rightarrow &A=0.
\end{align*}

The proof is complete.
\end{proof}

Let $\mathcal J$ be a left $\mathcal A$-module,
and $\phi$ be a bilinear mapping from End$_{\mathcal A}(\mathcal M)\times$ End$_{\mathcal A}(\mathcal M)$ into $\mathcal J$.

We say that $\phi$ is \emph{$\mathcal A$-bilinear} if $\phi(aA,B)=\phi(A,aB)=a\phi(A,B)$ for each $A,B\in$ End$_{\mathcal A}(\mathcal M)$ and $a\in\mathcal A$.

We say that $\phi$ preserves zero product if $AB=0$ implies that $\phi(A,B)=0$ for each $A,B\in$ End$_{\mathcal A}(\mathcal M)$.

\begin{lemma}\label{l10}
Let $\mathcal J$ be a left $\mathcal A$-module,
and $\phi: End_{\mathcal A}(\mathcal M)\times End_{\mathcal A}(\mathcal M) \rightarrow\mathcal J$ be an $\mathcal A$-bilinear mapping preserving zero product.
Then for each $A,B\in End_{\mathcal A}(\mathcal M),L\in\mathcal L$, and $R\in\mathcal R$,
we have:
\begin{align}
\phi(A,LB)=\phi(AL,B)=\phi(I,ALB)
\end{align}
and
\begin{align}
\phi(AR,B)=\phi(A,RB)=\phi(ARB,I).
\end{align}
\end{lemma}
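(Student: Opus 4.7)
The plan is to reduce to the case where $L$ (resp.\ $R$) is an idempotent in $\mathcal{L}$ (resp.\ $\mathcal{R}$), exploit $P^{2}=P$ through a handful of short zero-product cancellations, and then spread back to arbitrary $L$ and $R$ by $\mathcal{A}$-bilinearity.

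First I would invoke Lemma~\ref{l9}(2) to write any $L\in\mathcal{L}$ as an $\mathcal{A}$-linear combination of idempotents in $\mathcal{L}$, and similarly for $\mathcal{R}$; because $\phi$ is $\mathcal{A}$-bilinear, it suffices to prove each identity for a single idempotent. For an idempotent $P\in\mathcal{L}$ the key observation is that the four products $(A-AP)(PB)$, $(AP)(B-PB)$, $(P-I)(PB)$ and $(P)(B-PB)$ all vanish once one cancels $P^{2}$ against $P$. Applying the zero-product hypothesis and additivity to each of these yields respectively
\begin{align*}
\phi(A,PB)&=\phi(AP,PB), & \phi(AP,B)&=\phi(AP,PB),\\
\phi(P,PB)&=\phi(I,PB), & \phi(P,B)&=\phi(P,PB).
\end{align*}
Chaining the first row gives $\phi(A,PB)=\phi(AP,B)$, and chaining the second row gives $\phi(P,B)=\phi(I,PB)$.

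Spreading the identity $\phi(P,B)=\phi(I,PB)$ by $\mathcal{A}$-bilinearity produces $\phi(L,B)=\phi(I,LB)$ for every $L\in\mathcal{L}$; since $\mathcal{L}$ is a left ideal of $End_{\mathcal{A}}(\mathcal{M})$ by Lemma~\ref{l9}(3), one has $AL\in\mathcal{L}$, and substituting $AL$ for $L$ yields $\phi(AL,B)=\phi(I,ALB)$. Combined with $\phi(A,LB)=\phi(AL,B)$ this completes the first chain. For the second chain I would run the symmetric argument with an idempotent $Q\in\mathcal{R}$, obtaining $\phi(A,QB)=\phi(AQ,B)$ and $\phi(A,Q)=\phi(AQ,I)$; after $\mathcal{A}$-linear spreading the latter becomes $\phi(A,R)=\phi(AR,I)$ for all $R\in\mathcal{R}$, and replacing $R$ by $RB\in\mathcal{R}$ (right ideal) gives $\phi(A,RB)=\phi(ARB,I)$.

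The only point demanding care is the left/right asymmetry between $\mathcal{L}$ and $\mathcal{R}$: since $\mathcal{L}$ is a left ideal, the operator $A$ must be absorbed from the left via $AL\in\mathcal{L}$; since $\mathcal{R}$ is a right ideal, $B$ must be absorbed from the right via $RB\in\mathcal{R}$. Once this asymmetry is properly tracked, the argument reduces to routine zero-product algebra plus a linear extension, and I do not anticipate a more substantive obstacle.
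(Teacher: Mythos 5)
Your argument is correct and follows essentially the same route as the paper: reduce to idempotents via Lemma~\ref{l9}(2) and $\mathcal A$-bilinearity, cancel zero products of the form $(A-AP)(PB)$ and $(AP)(B-PB)$, and then use the ideal properties from Lemma~\ref{l9}(3) to pass to $\phi(I,ALB)$ and $\phi(ARB,I)$. Your derivation of $\phi(P,B)=\phi(I,PB)$ as a separate step merely makes explicit what the paper leaves implicit when it substitutes $AL\in\mathcal L$ back into the first identity.
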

\begin{proof}
Suppose $P$ is an idempotent in End$_{\mathcal A}(\mathcal M)$.
Let $Q=I-P$.

Since $\phi$ preserves zero product,
we have
$$\phi(A,PB)=\phi(AP+AQ,PB)=\phi(AP,PB)=\phi(AP,B-QB)=\phi(AP,B).$$

By Lemma \ref{l9}(2),
each element in $\mathcal{L}$ is an $\mathcal A$-linear combination of idempotents in $\mathcal{L}$.
Considering $\phi$ is $\mathcal A$-bilinear,
we obtain that $\phi(A,LB)=\phi(AL,B)$.

By Lemma \ref{l9}(3),
$\mathcal L$ is a left ideal,
so $AL\in\mathcal L$.
Hence $\phi(AL,B)=\phi(I,ALB)$.

Similarly, we can show the equation (4.2) is true.
\end{proof}

For an algebra $\mathcal A$ with unit $e$, a linear mapping $\delta$ on $\mathcal A$ is said to be a \emph{generalized derivation}
if $\delta(ab)=a\delta(b)+\delta(a)b-a\delta(e)b$, for all $a, b$ in $\mathcal A$.
\begin{theorem}\label{t4}
Suppose that $\mathcal A$ is a commutative C*-algebra with unit $e$,
and $\mathcal M$ is a Hilbert~$\mathcal A$-module, and moreover, there exist $x_0$ in $\mathcal M$
and $f_0$ in $\mathcal M^{'}$ such that $f_0(x_0)=e$.
If $\delta$ is an $\mathcal A$-linear mapping from $End_{\mathcal A}(\mathcal M)$ into itself such that:
for each $A,B,C$ in End$_{\mathcal A}(\mathcal M)$, $AB=BC=0$ implies that $A\delta(B)C=0$,
then $\delta$ is a generalized derivation.
In particular,
if $\delta(I)=0$,
where $I$ is the unit of End$_{\mathcal A}(\mathcal M)$,
then $\delta$ is a derivation.
\end{theorem}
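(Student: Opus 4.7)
The plan is to show that the $\mathcal{A}$-bilinear map
\[
\phi(A, B) := \delta(AB) - A\delta(B) - \delta(A)B + A\delta(I)B
\]
vanishes identically. The identity $\phi \equiv 0$ is exactly the generalized-derivation equation, and when $\delta(I) = 0$ it becomes $\delta(AB) = A\delta(B) + \delta(A)B$, the ordinary derivation equation (giving the ``in particular'' clause). The $\mathcal{A}$-bilinearity of $\phi$ and the identities $\phi(I, B) = \phi(A, I) = 0$ follow immediately from the $\mathcal{A}$-linearity of $\delta$.

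The natural route is to apply Lemma~\ref{l10} to $\phi$; this requires $\phi$ to preserve zero product. Expanding $\phi(A, B) C$ and invoking the three-variable hypothesis, one obtains
\[
\phi(A, B) C \;=\; \delta(AB) C - A\delta(B) C - \delta(A) B C + A\delta(I) B C \;=\; 0
\]
whenever $AB = BC = 0$ (the four terms vanish by $AB = 0$, by the hypothesis, and by $BC = 0$ twice), and symmetrically $C \phi(A, B) = 0$ whenever $CA = AB = 0$. Thus $\phi(A, B)$ is annihilated on the right by the right annihilator of $B$ (when $AB = 0$) and on the left by the left annihilator of $A$. The main obstacle is to lift this one-sided annihilation to the outright identity $\phi(A, B) = 0$ in the $AB = 0$ case. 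I would exploit the distinguished idempotent $P := \theta_{x_0, f_0}$ from Lemma~\ref{l9}(1) together with the routine identities $L(I - P) = 0$ for $L \in \mathcal{L}$ and $(I - P) R = 0$ for $R \in \mathcal{R}$ (both immediate from $f_0(x_0) = e$): instantiating the hypothesis with $B = I - P$ and using the $\mathcal{A}$-linear idempotent decomposition of $\mathcal{L}$ and $\mathcal{R}$ from Lemma~\ref{l9}(2), one can chase the consequences to $\phi(A, B)\,\mathcal{L} = 0$ in the $AB = 0$ case, and then the left-separating property of $\mathcal{L}$ (Lemma~\ref{l9}(4)) forces $\phi(A, B) = 0$.

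Once zero-product preservation is established, Lemma~\ref{l10} furnishes $\phi(A, LB) = \phi(I, ALB) = 0$ for every $L \in \mathcal{L}$ (since $\phi(I, \cdot) \equiv 0$), and symmetrically $\phi(AR, B) = 0$ for every $R \in \mathcal{R}$. Specializing $B = I$ gives $\phi(A, L) = 0$ for every $A$ and every $L \in \mathcal{L}$. A short cocycle-type computation then shows
\[
\phi(A, BL) - \phi(A, B) L \;=\; \phi(AB, L) - A \phi(B, L) \;=\; 0,
\]
so $\phi(A, B) L = \phi(A, BL) = 0$ for every $L \in \mathcal{L}$ (using $BL \in \mathcal{L}$ since $\mathcal{L}$ is a left ideal). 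The left-separating property of $\mathcal{L}$ (Lemma~\ref{l9}(4)) applied once more yields $\phi(A, B) = 0$ for all $A, B$, which is the generalized-derivation identity.
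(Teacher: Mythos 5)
Your setup is right and your endgame is right, but the pivotal step --- that the map $\phi(A,B)=\delta(AB)-A\delta(B)-\delta(A)B+A\delta(I)B$ preserves zero products --- is exactly the hard part of the theorem, and you have not proved it. Note that ``$AB=0\Rightarrow\phi(A,B)=0$'' is precisely the case $AB=0$ of the generalized-derivation identity you are trying to establish, so asserting it is close to assuming the conclusion. What you actually derive from the hypothesis is only $\phi(A,B)C=0$ whenever $AB=BC=0$, i.e.\ $\phi(A,B)$ is killed by the right annihilator of $B$; for a general $B$ that annihilator need not meet $\mathcal L$ at all (it can even be trivial), so it is not a separating set and cannot force $\phi(A,B)=0$. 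Your proposed fix --- instantiating the hypothesis at $B=I-P$ with $P=\theta_{x_0,f_0}$ and ``chasing consequences'' to $\phi(A,B)\mathcal L=0$ --- is not an argument: taking $B=I-P$ only yields information about $\delta(I-P)$ for the special middle entry $I-P$, and gives no control over $\phi(A,B)$ for an arbitrary pair with $AB=0$. To get $\phi(A,B)L=0$ from your one-sided annihilation you would need $BL=0$ for the relevant $L\in\mathcal L$, i.e.\ $Bx=0$, which fails for general $B$.

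The paper avoids this obstruction by never trying to show that your $\phi$ preserves zero products directly. Instead it runs Lemma~\ref{l10} twice on two auxiliary maps whose zero-product preservation is genuinely immediate: first $\phi_1(X,Y)=X\delta(YA_0)B_0$ for a fixed pair with $A_0B_0=0$ (if $XY=0$ then $X\cdot YA_0=0$ and $YA_0\cdot B_0=0$, so the hypothesis applies with $YA_0$ in the middle slot), which together with the right-separating property of $\mathcal R$ yields $\delta(AA_0)B_0=A\delta(A_0)B_0$ whenever $A_0B_0=0$; and then $\phi_2(X,Y)=\delta(AX)Y-A\delta(X)Y$, whose zero-product preservation is exactly the identity just proved, after which Lemma~\ref{l10} and the left-separating property of $\mathcal L$ give the full generalized-derivation identity. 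Your concluding cocycle computation
\[
\phi(A,BL)-\phi(A,B)L=\phi(AB,L)-A\phi(B,L)
\]
is correct and would close the argument \emph{if} zero-product preservation were available, but as written the proof has a genuine gap at that point; you need to replace the single map $\phi$ by the paper's two-stage bootstrap (or supply an actual proof that $\phi$ preserves zero products, which I do not see how to do more directly).
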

\begin{proof}
Suppose $A,B,X,Y,A_0,B_0$ are arbitrary elements in End$_{\mathcal A}(\mathcal M)$,
where $A_0B_0=0$,
$L$ and $R$ are arbitrary elements in $\mathcal L$ and $\mathcal R$, respectively.

Define a bilinear mapping $\phi_1$:
$\phi_1(X,Y)=X\delta(YA_0)B_0$.
Then $\phi_1$ is an $\mathcal A$-bilinear mapping preserving zero product.

By Lemma \ref{l10},
we have
$$\phi_1(R,A)=\phi_1(RA,I),$$
i.e.
$$R\delta(AA_0)B_0=RA\delta(A_0)B_0.$$

Since $\mathcal{R}$ is a right separating set of End$_{\mathcal A}(\mathcal M)$,
we have
$$\delta(AA_0)B_0=A\delta(A_0)B_0.$$

Now define a bilinear mapping $\phi_2$:
$\phi_2(X,Y)=\delta(AX)Y-A\delta(X)Y$.
Then $\phi_2$ is also an $\mathcal A$-bilinear mapping preserving zero product.

Again by Lemma \ref{l10},
we have
$$\phi_2(B,L)=\phi_2(I,BL),$$

i.e.
$$\delta(AB)L-A\delta(B)L=\delta(A)BL-A\delta(I)BL.$$

Since $\mathcal{L}$ is a left separating set of End$_{\mathcal A}(\mathcal M)$,
we obtain that
$$\delta(AB)=A\delta(B)+\delta(A)B-A\delta(I)B.$$

That is to say $\delta$ is a generalized derivation.
The proof is complete.
\end{proof}

Applying the above Theorem, we can get the following corollary immediately.

\begin{corollary}
Suppose $\mathcal A$ is a commutative C*-algebra with unit $e$,
$\mathcal M$ is a Hilbert~$\mathcal A$-module, and moreover, there exist $x_0$ in $\mathcal M$
and $f_0$ in $\mathcal M^{'}$ such that $f_0(x_0)=e$.
Then each $\mathcal A$-linear local derivation $\delta$ on $End_{\mathcal A}(\mathcal M)$ is a derivation.
\end{corollary}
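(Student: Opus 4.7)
The plan is to deduce the corollary directly from Theorem~\ref{t4}. Given an $\mathcal A$-linear local derivation $\delta$ on $\text{End}_{\mathcal A}(\mathcal M)$, I need to verify the zero-product hypothesis of Theorem~\ref{t4}, namely that $AB = BC = 0$ implies $A\delta(B)C = 0$, and then show that $\delta(I) = 0$ so that the generalized derivation produced by Theorem~\ref{t4} is actually a derivation.

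For the zero-product step, I would fix $A,B,C$ with $AB=BC=0$ and use the local derivation property at $B$: there exists a genuine derivation $d_B$ on $\text{End}_{\mathcal A}(\mathcal M)$ with $\delta(B)=d_B(B)$. Applying $d_B$ to the relations $AB=0$ and $BC=0$ and using the Leibniz rule gives $d_B(A)B + A\,d_B(B) = 0$ and $d_B(B)C + B\,d_B(C) = 0$. From the first, $A\,d_B(B) = -d_B(A)B$; multiplying on the right by $C$ and using $BC=0$ yields
\begin{equation*}
A\,\delta(B)\,C \;=\; A\,d_B(B)\,C \;=\; -d_B(A)\,B\,C \;=\; 0,
\end{equation*}
which is exactly the hypothesis of Theorem~\ref{t4}.

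Since $\delta$ is $\mathcal A$-linear by assumption, Theorem~\ref{t4} applies and yields that $\delta$ is a generalized derivation, that is,
\begin{equation*}
\delta(AB) = A\,\delta(B) + \delta(A)\,B - A\,\delta(I)\,B
\end{equation*}
for all $A,B \in \text{End}_{\mathcal A}(\mathcal M)$. To finish, I need only check that $\delta(I)=0$. But $\delta$ is a local derivation, so there is a derivation $d_I$ with $\delta(I) = d_I(I)$, and any derivation on a unital algebra satisfies $d_I(I) = d_I(I\cdot I) = d_I(I) + d_I(I)$, forcing $d_I(I)=0$. Hence $\delta(I)=0$ and $\delta$ is a derivation.

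I do not foresee a genuine obstacle here; the whole argument is a short deduction from the structural result already established in Theorem~\ref{t4}. The only spot that requires a moment's thought is the zero-product verification, since $\delta$ itself is not a derivation, so one must carefully exploit the derivation $d_B$ guaranteed by the local derivation hypothesis precisely at the middle argument $B$, rather than at $A$ or $C$; the two Leibniz identities above then combine with the given annihilation conditions in exactly the way needed to kill $A\delta(B)C$.
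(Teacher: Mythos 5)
Your proposal is correct and follows essentially the same route as the paper: verify the hypothesis $AB=BC=0\Rightarrow A\delta(B)C=0$ by invoking the local derivation property at $B$ and the Leibniz rule, note $\delta(I)=d_I(I)=0$, and apply Theorem~\ref{t4}. The only cosmetic difference is that the paper expands $\delta_B(ABC)$ in one three-term Leibniz identity while you combine two two-term identities, which amounts to the same computation.
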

\begin{proof}
For each $A,B,C$ in End$_{\mathcal A}(\mathcal M)$,
if $AB=BC=0$,
by the definition of local derivation,
there exists a derivation $\delta_B$ such that $\delta_B(B)=\delta(B)$.
Thus we have
$$A\delta(B)C=A\delta_B(B)C=\delta_B(ABC)-\delta_B(A)BC-AB\delta_B(C)=0.$$

Let $I$ be the unit of End$_{\mathcal A}(\mathcal M)$,
by the definition of local derivation,
there exists a derivation $\delta_I$ such that $\delta_I(I)=\delta(I)=0$.

By Theorem \ref{t4}, $\delta$ is a derivation.
The proof is complete.
\end{proof}

\emph{Acknowledgements}. This paper was partially supported by National Natural Science Foundation of China(Grant No. 11371136).


\begin{thebibliography}{99}


\bibitem{Ayupov3} S. Ayupov, K. Kudaybergenov, 2-local derivations on von Neumann algebras, Positivity, 19(2014), 445-455.

\bibitem{Ayupov2} S. Ayupov, K. Kudaybergenov,  A. Peralta, A survey on local and 2-local derivations on C*- and von Neuman algebras, Topics in functional analysis and algebra, 73-126, Contemp Math., 672, 2016.

\bibitem{Christensen} E. Christensen, Derivations of nest algebras, Math. Ann., 229(1977), 155-161.

\bibitem{Crist} R. Crist, Local derivations on operator algebras, J. Funct. Anal., 135(1996), 72-92.

\bibitem{Cusack} J. Cusack, Jordan derivations on rings, Proc. Amer. Math. Soc., 53(1975), 321-324.

\bibitem{Hadwin} D. Hadwin, J. Li, Local derivations and local automorphisms on some algebras, J. Operator Theory, 60(2008), 29-44.

\bibitem{He} J. He, J. Li, G. An, W. Huang, Characterizations of 2-local derivations and local Lie derivations on some algebras, Sib. Math. J., to appear.

\bibitem{Johnson} B. Johnson, Local derivations on C*-algebras are derivations, Trans. Amer. Math. Soc., 353(2001), 313-325.

\bibitem{Kadison2} R. Kadison, Derivations of operator algebras, Ann. Math., 83(1966), 280-293.

\bibitem{Kadison} R. Kadison, Local derivations, J. Algebra, 130(1990), 494-509.

\bibitem{Kim} S. Kim, J. Kim, Local automorphisms and derivations on $M_n(\mathbb{C})$, Proc. Amer. Math. Soc., 132(2004), 1389-1392.

\bibitem{Lance} E. Lance, Hilbert C*-Modules: A Toolkit for Operator Algebraists, Cambridge University Press, 1995.

\bibitem{Larson} D. Larson, A. Sourour, Local derivations and local automorphisms, Proc. Sympos. Pure Math., 51(1990), 187-194.

\bibitem{Li} J. Li, Z. Pan, Annihilator-preserving maps, multipliers and local derivations, Linear Algebra Appl., 432(2010), 5-13.

\bibitem{Lipengtong} P. Li, D. Han, W. Tang, Derivations on the algebra of operators in Hilbert C*-modules, Acta Math. Sinica(Engl. Ser.), 28(2012), 1615-1622.

\bibitem{Mog} M. Moghadam, M. Miri, A. Janfada, A note on derivations on the algebra of operators in Hilbert C*-modules, Mediterr. J. Math., 13(2016), 1167-1175.

\bibitem{sakai} S. Sakai, Derivations of W*-algebras, Ann. Math., 83(1966), 273-279.

\bibitem{Semrl}P. $\check{S}$emrl, Local automorphisms and derivations on $B(H)$, Proc. Amer. Math. Soc., 125(1997), 2677-2680.

\bibitem{Zhang} J. Zhang, H. Li, 2-Loacl derivations on digraph algebras, Acta Math. Sinica(Chin. Ser.), 49(2006), 1401-1406.

\end{thebibliography}
\end{document}